\documentclass{ifacconf}

\usepackage{graphicx}      
\usepackage{natbib}        
\usepackage{amsmath}
\usepackage{amssymb}
\let\theoremstyle\relax
\usepackage{amsthm}
\usepackage{booktabs}
\usepackage{balance}

\begingroup
    \makeatletter
    \@for\theoremstyle:=definition,remark,plain\do{%
        \expandafter\g@addto@macro\csname th@\theoremstyle\endcsname{%
            \addtolength\thm@preskip\parskip
            }%
        }
\endgroup

\theoremstyle{plain}
\newtheorem{lemma}{Lemma}
\newtheorem{theorem}{Theorem}

\newtheorem{remark}{Remark}
\newtheorem{assumption}{Assumption}

\theoremstyle{definition}

\usepackage{xcolor}

\begin{document}
\begin{frontmatter}

\title{
Direct Data-Driven Approximate Optimal Control of Nonlinear Input-Affine Systems\thanksref{footnoteinfo}} 

\thanks[footnoteinfo]{The work of B. Nortmann is supported as part of NCCR Automation, a National Centre of Competence in Research, funded by the Swiss National Science Foundation (grant number 51NF40\_225155).}

\author[Empa]{B. Nortmann} 
\author[Imperial]{T. Mylvaganam} 

\address[Empa]{Urban Energy Systems Laboratory, Swiss Federal Laboratories for Materials Science and Technology, Dübendorf, Switzerland (e-mail: benita.nortmann@empa.ch).}
\address[Imperial]{Department of Aeronautics, Imperial College London, London, UK (e-mail: tmylvaganam@imperial.ac.uk)}

\begin{abstract}                
In this paper, we combine a data-driven system representation with a framework to systematically construct (approximate) solutions to nonlinear optimal control problems.
By immersing the unknown dynamics into an extended state space, solutions are characterised via purely data-dependent algebraic conditions.
This allows us to design dynamic state-feedback controllers with local stability and performance guarantees for unknown nonlinear, input-affine systems directly using data, without explicitly identifying the dynamics.
\end{abstract}

\begin{keyword}
Data-based control, optimal control, stability of nonlinear systems
\end{keyword}

\end{frontmatter}

\section{Introduction} \label{sec:intro}
The problem of designing controllers directly from measured data has been receiving increasing attention in recent years. 
A framework to design state-feedback controllers for linear time-invariant (LTI) systems, which utilises Willems' Fundamental Lemma (see \cite{Willems}) to represent the closed-loop system directly using data has been introduced in \cite{DePersis2020}.
Various approaches have been presented to extend the ideas to classes of nonlinear systems. For more details and the relation to nonlinear system identification, see \cite{DePersis2023review,Monshizadeh2025} and references therein. The main focus has so far been on data-driven stabilisation of nonlinear systems. Optimal control has been considered for bilinear systems (\cite{Yuan2022}) and for discrete input-affine systems via online linearisation (\cite{Dai2025}). Data-based (approximate) solutions to optimal control problems 
using different approaches to parametrise unknown nonlinear dynamics have been proposed in \cite{Yu2022}, \cite{Prag2022} and references therein. 

Even in the model-based case, solving infinite-horizon nonlinear optimal control problems is generally challenging. 
The solution is characterised via the Hamilton-Jacobi-Bellmann (HJB) equation (\cite{liberzon2011}).
With the exception of certain special cases, closed-form solutions of this nonlinear partial differential equation (PDE) are not available and numerical solutions are computationally expensive to obtain. 
For optimal control problems involving nonlinear, input-affine systems, \cite{Sassano2018} presents a framework to construct (approximate) solutions without involving the explicit solution of any PDE.
Namely, via an immersion of the nonlinear dynamics into an extended state-space, approximate solutions are constructed systematically, based solely on algebraic matrix conditions, with any matrix satisfying these conditions referred to as an \emph{algebraic $P$ solution.}   
Moreover, the level of approximation is quantifiable and, thus, also minimisable. 
More details about the framework and its relation to alternative methods for nonlinear optimal control can be found in \cite{Sassano2021} and references therein.

In this work, we combine the discussed line of direct data-driven control with the aforementioned ``framework of algebraic $P$ solutions".
This allows us to determine (approximate) solutions to nonlinear optimal control problems involving unknown input-affine systems directly using measured data. 
The algebraic $P$ solutions framework has been extended to other 
control problems involving PDEs, including classes of stochastic control problems (\cite{GONG2025}, $\mathcal{L}_2$-disturbance attenuation (\cite{Sassano2012}), and differential games (\cite{Mylvaganam2015}).
Hence, the presented approach lays the foundations for the direct data-driven solution of a wide class of nonlinear control problems. 

The paper is organised as follows.  
The considered problems are defined in Section~\ref{sec:problem}. A
data-driven system representation is introduced in Section~\ref{sec:data}. Section~\ref{sec:solution} 
proposes control design methods via purely data-dependent algebraic conditions. 
The results are demonstrated via an example in Section~\ref{sec:example}. Concluding remarks are provided in Section~\ref{sec:conclusion}.

\textbf{Notation.} The sets of real numbers and nonnegative real numbers are denoted by $\mathbb{R}$, and $\mathbb{R}_{\geq 0}$. 
Given a mapping $V: \mathbb{R}^n \rightarrow \mathbb{R}$ and a vector $x \in \mathbb{R}$, $V_x$ denotes the column vector containing the partial derivatives of $V$ with respect to each element of $x$, and $\mathbf{J}_{V}(x)$ denotes the Jacobian matrix of $V$ with respect to $x$. A mapping $f: \mathbb{R}^n \rightarrow \mathbb{R}^m$ is called smooth if it has derivatives of all orders, and $f$ is called analytic if it is smooth and the Taylor series expansion around every point $x_0$ equals $f(x)$ in a neighbourhood of $x_0$. Given a matrix $A$, its transpose is denoted by $A^\top$, and its Moore–Penrose inverse by $A^\dagger$. Given a square matrix $B$, $B \succ 0$ ($B \succeq 0$) denotes that $B$ is positive definite (positive semidefinite) and $B^{-1}$ denotes its inverse. The $n \times n$ identity matrix is denoted by $I_n$.

\section{Problem formulation} \label{sec:problem}
Consider the input-affine dynamical system 
\begin{equation}
    \dot x 
    = f(x) + g(x) u,
    \label{eq:sysdyn}
\end{equation}
where  $x \in \mathbb{R}^n$ is the state and $u \in \mathbb{R}^m$ is the control input, 
and let $x(0) = x_0$.

\begin{assumption}
\label{as:properties_dynamics}
The mapping $f: \mathbb{R}^{n} \rightarrow \mathbb{R}^n$ is analytic and  such that 
$ 
f(0) =0\,,
$
and the mapping $g: \mathbb{R}^n \rightarrow\mathbb{R}^{n \times m}$ is smooth.
\end{assumption}
\noindent  A consequence of Assumption \ref{as:properties_dynamics} is that there exist matrix-valued mappings $F: \mathbb{R}^n \rightarrow \mathbb{R}^{n\times n}$ such that $f(x) = F(x) x $.

We focus on designing state-feedback control strategies for the action $u$. The solution to various nonlinear feedback control problems, such as stabilisation and optimal control, 
involve the solution of PDEs, 
see \emph{e.g.} \cite{slotine1991,liberzon2011}.
In what follows we build on the results in \cite{Sassano2018}, which utilise algebraic conditions and system immersion -- resulting in dynamic feedback controllers -- to circumvent the need to solve PDEs. Hence, we consider 
(dynamic) 
control laws of the form
\begin{subequations}
\label{eq:dynamic-state-feedback}
    \begin{align}
        u &= \tilde h(x, \xi), 
        \label{eq:dynamic-state-feedback-u} \\
        \dot \xi &= \alpha (x,\xi), \label{eq:dynamic-state-feedback-xi}
    \end{align}
\end{subequations}
where $\xi \in \mathbb{R}^{\nu}$, $\tilde h(0,0) = 0$, $\alpha(0,0) = 0$, and $\tilde h: \mathbb{R}^n \times \mathbb{R}^{\nu} \rightarrow \mathbb{R}^m$ and $\alpha: \mathbb{R}^n \times \mathbb{R}^{\nu} \rightarrow \mathbb{R}^{\nu}$ are smooth mappings. The variable $\xi$ is referred to as the \emph{dynamic extension}, and its initial condition $\xi(0)=\xi_0$ can be seen as a design parameter. We consider the following control problems. 

\subsection{Stabilisation} \label{subsec:probstab}
Consider system \eqref{eq:sysdyn} and the problem of designing a feedback law of the form \eqref{eq:dynamic-state-feedback}, which stabilises the equilibrium at the origin. 
More precisely, this involves designing functions $\tilde h$ and $\alpha$, such that the zero equilibrium of the resulting extended closed-loop system
\begin{equation}
    \begin{bmatrix}
        \dot x \\
        \dot \xi
    \end{bmatrix} = \begin{bmatrix}
        f(x) + g(x) \tilde h(x, \xi) \\
        \alpha(x, \xi)
    \end{bmatrix}
    \label{eq:augmented_sys_dynamic_cl}
\end{equation}
is locally asymptotically stable (LAS).
Via standard Lyapunov arguments, \eqref{eq:augmented_sys_dynamic_cl} is LAS if there exists a smooth function $V: \mathbb{R}^n \times \mathbb{R}^\nu \rightarrow \mathbb{R}$ and a neighbourhood $\mathcal{N}$ containing the origin of $(x, \xi)$, such that $V(0,0)=0$ and
\begin{align}
\begin{cases}
    V(x,\xi) > 0,  \\
    \dot V(x,\xi) = \begin{bmatrix} V_x^\top V_{\xi}^\top \end{bmatrix} \begin{bmatrix}
        f(x) + g(x) \tilde h(x, \xi) \\
        \alpha(x, \xi)
    \end{bmatrix} < 0,
\end{cases} 
\label{eq:lyap_las}
\end{align}
for all  $ (x, \xi) \in \mathcal{N}\setminus \{0\}$. 
This gives the PDE condition
\begin{equation}
    V_x^\top (f(x)+g(x) \tilde h(x,\xi)) + V_{\xi}^\top \alpha(x,\xi) + \tilde q(x,\xi) =  0 ,
    \label{eq:stabilityPDE}
\end{equation}
where $\tilde q: \mathbb{R}^n \times \mathbb{R}^{\nu} \rightarrow \mathbb{R}_{\geq 0}$ is a positive definite function\footnote{That is, $\tilde q(0,0) = 0$ and $\tilde q(x,\xi) > 0$, $\forall x \neq 0 \in \mathbb{R}^n$ and $\forall \xi \neq 0 \in \mathbb{R}^{\nu}$.}.

\subsection{Approximate optimal control} \label{subsec:approximate_optimal}
Consider system \eqref{eq:sysdyn} and the cost functional
\begin{equation}
    J(x_0,u) = \frac{1}{2} \int_0^\infty \left( q(x) + u^\top u \right) dt,
    \label{eq:cost}
\end{equation}
with $q: \mathbb{R}^n \rightarrow \mathbb{R}_{\geq 0}$ a smooth mapping.

\begin{assumption}
    \label{as:cost}
    The function $q$ is positive-definite,
    and there exists a function $C: \mathbb{R}^n \rightarrow \mathbb{R}^{\mu \times n}$ such that $q(x) = x^\top C(x)^\top C(x) x$, for all $x$.
\end{assumption}

The ``classical'' infinite-horizon optimal control problem consists in designing a stabilising static feedback law $u = h(x)$, which minimises the cost \eqref{eq:cost}, subject to the dynamics \eqref{eq:sysdyn}. A solution can be found by solving the HJB PDE, see \emph{e.g.} \cite{liberzon2011}. With the aim of instead designing a dynamic feedback law towards optimising \eqref{eq:cost}, we consider the following auxiliary problem (which allows for potentially dynamic feedback control strategies), see also \cite{Sassano2021}.

Let $\mathcal{B} \subset \mathbb{R}^n \times \mathbb{R}^{\nu}$ be an open set containing the origin. Consider the augmented cost functional
\begin{equation}
    \tilde J(x_0,\xi_0,u) = \frac{1}{2} \int_0^\infty \left( q(x) + \Upsilon(x,\xi) + u^\top u \right) dt.
    \label{eq:augmented_cost}
\end{equation} 
where $\Upsilon: \mathbb{R}^n \times \mathbb{R}^\nu \rightarrow \mathbb{R}$ 
is such that $\Upsilon(0,0) = 0$ and $\Upsilon(x,\xi) \ge 0$, for all $(x, \xi) \in \mathcal{B}$. 
We then consider the (auxiliary) problem of designing a feedback law of the form \eqref{eq:dynamic-state-feedback}, such that the zero equilibrium of the resulting closed-loop system \eqref{eq:augmented_sys_dynamic_cl} is LAS with region of attraction containing $\mathcal{B}$, and such that 
\begin{equation}
    \tilde J(x_0,\xi_0,u^\star) \leq \tilde J(x_0,\xi_0,\tilde u), 
    \label{eq:optimality_condition}
\end{equation}
for all $\tilde u(x,\xi)$ of the form \eqref{eq:dynamic-state-feedback} and for all $(x_0,\xi_0) \in \mathcal{B}$.
Let
\begin{multline}
    \Psi(x,\xi, V_x, V_\xi) = \frac{1}{2}(q(x) + \Upsilon(x,\xi)) + \frac{1}{2} \tilde h(x,\xi)^\top \tilde h(x,\xi) \\
    + \begin{bmatrix} V_x^\top V_{\xi}^\top \end{bmatrix} \begin{bmatrix}
        f(x) + g(x) \tilde h(x, \xi) \\
        \alpha(x, \xi)
    \end{bmatrix},
    \label{eq:feedback_Hamiltonian}
\end{multline}
where $V: \mathbb{R}^n \times \mathbb{R}^{\nu} \rightarrow \mathbb{R}_{\geq 0}$ is a continuously differentiable positive-definite function.
Via the Dynamic Programming principle (\cite{Bellman1957}), \eqref{eq:augmented_cost} is minimised subject to the dynamics \eqref{eq:sysdyn}, \eqref{eq:dynamic-state-feedback-xi}, by $u^\star = {\arg \min}_{\tilde h(x,\xi)}  \Psi(x,\xi, V_x, V_\xi)$ satisfying 
\begin{equation}
    0 = \min_{\tilde h(x,\xi)} \Psi(x,\xi, V_x, V_\xi).
    \label{eq:HJB}
\end{equation}
The control law $u^\star$ corresponds to the ``classical" solution of an \emph{auxiliary, local} optimal control problem defined by the augmented cost \eqref{eq:augmented_cost} and the extended dynamics \eqref{eq:sysdyn}, \eqref{eq:dynamic-state-feedback-xi} in the region $\mathcal{B}$. It further constitutes (see, \emph{e.g.} \cite{Sassano2018,Sassano2021}) an approximate solution of the original optimal control problem (defined by \eqref{eq:sysdyn}, \eqref{eq:cost}), with two sources of approximation. 
Firstly, we are searching for a local solution in the region $\mathcal{B}$ of the (extended) state space $(x, \xi)$. Secondly, the solution is with respect to the modified (augmented) cost \eqref{eq:augmented_cost}, which features an additional running cost term $\Upsilon(x, \xi)$ depending on the system state $x$ as well as the dynamic extension $\xi$.

In this paper, we revisit the control problems defined above for the case in which the system dynamics \eqref{eq:sysdyn} are unknown, and propose a solution directly using measured input-state data.

\section{Data-driven system representation} \label{sec:data}
In the remainder of this paper, we consider \emph{unknown} input-affine systems. 
Let the following assumptions hold.
\begin{assumption}
    \label{as:unknown_but_well_behaved}
    The system dynamics are of the form \eqref{eq:sysdyn}. The mappings $f(x)$ and $g(x)$, while unknown, are such that Assumption~\ref{as:properties_dynamics} holds.
\end{assumption}
\begin{assumption}
    \label{as:basis functions}
    Libraries of basis functions $\mathcal{V}^f$ and $\mathcal{V}^g$, such that the unknown mappings $f(x)$ and $g(x)$ can be represented (exactly) as a linear combination of the elements of $\mathcal{V}^f$ and $\mathcal{V}^g$, respectively, are known.
\end{assumption}
Using Assumptions~\ref{as:unknown_but_well_behaved} and 
\ref{as:basis functions}, \eqref{eq:sysdyn} can be written as
\begin{align}
    \dot x & = F(x) x + g(x) u, \label{eq:sys_basis1}\\
    &= \tilde F Z(x) x + G \Xi(x) u, \label{eq:sys_basis2}
\end{align}
where $Z(x) \in \mathbb{R}^{\ell \times n}$ and $\Xi(x) \in \mathbb{R}^{p \times m}$ are matrices of known basis functions, and where the entries of $Z(x) x$ and $\Xi(x)$ are elements of $\mathcal{V}^f$ and $\mathcal{V}^g$, respectively.
Further, assume the following time-series data is available.
\begin{assumption}
    \label{as:data}
    Data of the state response $x_d$ and its rate of change $\dot x_d$ to a known exploring input $u_d$ can be collected at time instances $[ t_0, \ldots, t_{T-1} ]$.
\end{assumption}
Assumption~\ref{as:data} allows us to form the data matrices
\begin{subequations}
\begin{align}
    X_+ &= \begin{bmatrix} \dot x_d(t_0) & \ldots & \dot x_d(t_{T-1}) \end{bmatrix}, \\
    X_- &= \begin{bmatrix} x_d(t_0) & \ldots & x_d(t_{T-1}) \end{bmatrix}, \\
    U &= \begin{bmatrix} u_d(t_0) & \ldots & u_d(t_{T-1}) \end{bmatrix}.
\end{align}
Consider also the matrices
\label{eq:data_matrices}
\end{subequations}
\begin{subequations}
\begin{align}
   \mathcal{Z} &= \begin{bmatrix}  Z(x_d(t_0))x_d(t_0) & \ldots & Z(x_d(t_{T-1}))x_d(t_{T-1}) \end{bmatrix}, \\
    \mathcal{U} &= \begin{bmatrix} \Xi(x_d(t_0))u_d(t_0) & \ldots & \Xi(x_d(t_{T-1}))u_dt_{T-1}) \end{bmatrix},
\end{align}
which can be assembled from the data and the known matrices of basis functions $Z(x)$ and $\Xi(x)$. 
Note then that 
\label{eq:data_func_matrices}
\end{subequations}
\begin{equation}
    X_+ = \tilde F \mathcal{Z} + G \mathcal{U} = \begin{bmatrix}
        \tilde F & G
    \end{bmatrix} \begin{bmatrix}
        \mathcal{Z}\\
        \mathcal{U}
    \end{bmatrix}.
    \label{eq:data_matrix_relation}
\end{equation}

Consider the special case in which the feedback law \eqref{eq:dynamic-state-feedback} reduces to a static feedback law of the form
\begin{equation}
    u = h(x) = H(x) x.
    \label{eq:static-state-feedback}
\end{equation}
System \eqref{eq:sysdyn} in closed loop with \eqref{eq:static-state-feedback} evolves according to the dynamics
\begin{equation}
    \dot x =  f(x) + g(x)h(x) = \left( F(x) + g(x) H(x) \right)x.
    \label{eq:sysdyn_cl_static}
\end{equation}

\begin{lemma}
    \label{le:dd_sys_rep}
    Let Assumptions~\ref{as:unknown_but_well_behaved}, \ref{as:basis functions} and \ref{as:data} hold, and consider the matrices \eqref{eq:data_matrices}, \eqref{eq:data_func_matrices}. 
    If 
    \begin{equation}
    \text{rank} \left( \begin{bmatrix}
        \mathcal{Z} \\
        \mathcal{U}
    \end{bmatrix} \right) = \ell + p,
    \label{eq:rank_condition}
    \end{equation}
    then the ``closed-loop dynamics matrix'' $\left( F(x) + g(x) H(x) \right)$ can equivalently be represented using data as
    \begin{equation}
    \left( F(x) + g(x) H(x) \right) = X_+ \tilde G(x),
    \label{eq:dd_sysdyn_cl} 
    \end{equation}
    and the control matrix $H(x)$, can be represented as
    \begin{equation}
       H(x) = \Xi(x)^\dagger \mathcal{U} \tilde G(x),
        \label{eq:dd_H}
    \end{equation}
    where $\tilde G(x) \in \mathbb{R}^{T \times n}$ satisfies $Z(x) = \mathcal{Z} \tilde G(x)$.
\end{lemma}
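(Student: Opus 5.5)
The argument follows the De Persis--Tesi template for LTI systems, adapted to the basis-function representation \eqref{eq:sys_basis2}. The rank condition \eqref{eq:rank_condition} says that $\begin{bmatrix} \mathcal{Z}^\top & \mathcal{U}^\top\end{bmatrix}^\top$ has full row rank $\ell+p$. Hence for every $x$ there exists $\tilde G(x)\in\mathbb{R}^{T\times n}$ solving the stacked linear system
\begin{equation*}
\begin{bmatrix} Z(x) \\ \Xi(x) H(x) \end{bmatrix} = \begin{bmatrix} \mathcal{Z} \\ \mathcal{U} \end{bmatrix} \tilde G(x).
\end{equation*}
One explicit choice is the right inverse of the data matrix applied to the left-hand side. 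Such a $\tilde G(x)$ satisfies in particular $Z(x)=\mathcal{Z}\tilde G(x)$, as required in the statement. I will interpret the statement this way: $\tilde G$ is chosen to satisfy both block rows, with $H$ then recovered through the second row. Conversely, any $\tilde G$ with $Z=\mathcal{Z}\tilde G$ parametrises a feedback through $\Xi H = \mathcal{U}\tilde G$.

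Next I obtain \eqref{eq:dd_sysdyn_cl}. By Assumption~\ref{as:basis functions} and \eqref{eq:sys_basis1}--\eqref{eq:sys_basis2}, $F(x)x=\tilde F Z(x)x$ and $g(x)=G\,\Xi(x)$, so I take $F(x)=\tilde F Z(x)$. Then
\begin{equation*}
F(x)+g(x)H(x) = \tilde F Z(x) + G\,\Xi(x)H(x) = \begin{bmatrix}\tilde F & G\end{bmatrix}\begin{bmatrix}\mathcal{Z}\\ \mathcal{U}\end{bmatrix}\tilde G(x) = X_+\tilde G(x),
\end{equation*}
where the last equality is \eqref{eq:data_matrix_relation}. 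The unknown $\tilde F, G$ have thus been eliminated in favour of data.

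For \eqref{eq:dd_H}, I use the second block row, $\Xi(x)H(x)=\mathcal{U}\tilde G(x)$, and multiply on the left by $\Xi(x)^\dagger$. This yields $H(x)$ provided $\Xi(x)^\dagger\Xi(x)=I_m$, i.e. $\Xi(x)$ has full column rank $m$. This is the main point requiring care. I would argue it from the library structure: $g=G\Xi$ and the columns of $\Xi$ are the input channels. If full column rank fails at some $x$, I would restrict to $H$ whose rows lie in the row space of $\Xi(x)$, i.e. define $H$ via \eqref{eq:dd_H}. Then $\Xi H = \Xi\Xi^\dagger\mathcal{U}\tilde G$, and consistency requires $\mathcal{U}\tilde G(x)$ to lie in the range of $\Xi(x)$, which I would impose as part of the parametrisation. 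Since $g(x)H(x)=G\,\Xi(x)H(x)$ depends only on $\Xi H$, the closed-loop representation is unaffected in either case.
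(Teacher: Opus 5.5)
Your proposal is correct and follows the paper's proof essentially step for step. You use the rank condition to guarantee a $\tilde G(x)$ solving the stacked system, substitute into $\begin{bmatrix}\tilde F & G\end{bmatrix}$ with \eqref{eq:data_matrix_relation} to get \eqref{eq:dd_sysdyn_cl}, and read $H(x)$ off the lower block row. Your added care is a useful refinement: you make the choice $F(x)=\tilde F Z(x)$ explicit, and you note the need for $\Xi(x)^\dagger\Xi(x)=I_m$, or else for $\mathcal{U}\tilde G(x)$ to lie in the range of $\Xi(x)$; the paper leaves both implicit in the proof and only discusses the second in Remark~\ref{re:pseudoinverse}. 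One small fix: the restriction should be that the \emph{columns} of $H(x)$ lie in the row space of $\Xi(x)$, not its rows.
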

\begin{proof}
    Note that
    \begin{align}
         \left( F(x) + g(x) H(x) \right) = \begin{bmatrix}
            \tilde F & G
        \end{bmatrix} \begin{bmatrix}
            Z(x) \\
            \Xi(x) H(x)
        \end{bmatrix}. 
        \label{eq:cl_reform}
    \end{align}
    If \eqref{eq:rank_condition} holds, then there exists $\tilde G(x)$ satisfying
    \begin{equation}
        \begin{bmatrix}
            Z(x) \\
            \Xi(x) H(x) 
        \end{bmatrix} = \begin{bmatrix}
            \mathcal{Z} \\
            \mathcal{U}
        \end{bmatrix} \tilde G(x).
        \label{eq:G_definition}
    \end{equation}
    Combining \eqref{eq:G_definition} with \eqref{eq:cl_reform} and using \eqref{eq:data_matrix_relation} gives \eqref{eq:dd_sysdyn_cl}. Finally, \eqref{eq:dd_H}  follows from the lower block row in \eqref{eq:G_definition}.
\end{proof}

In Section~\ref{sec:solution}, we utilise this data-driven representation of the closed-loop system under static state-feedback
to design \emph{dynamic} state-feedback strategies for the control problems introduced in Section~\ref{sec:problem}.

\begin{remark}
\label{re:system_representation}
The data-driven system representation in Lemma~\ref{le:dd_sys_rep} is inspired by the system representation for LTI systems introduced in \cite{DePersis2020}. If the input map $g(x)$ does not depend on the state $x$, i.e. $g(x) = G$ and $\Xi(x) = I_m$, the representation in Lemma~\ref{le:dd_sys_rep} 
is very similar to the representation introduced in \cite{Guo2020}, which like several related works (e.g. \cite{Gue2023,Liu2024,Liu2025,Dai2025}) focuses on input-affine systems with constant input map. 
General input maps are considered in \cite{DePersis2023} and \cite{Strässer2025} where the dynamics are lifted into a form with constant input map by adding an integrator to include the controller dynamics and using the Koopman operator, respectively, and in \cite{Guo2022,Madeira2024,Chen2025} in the context of robust sum of squares methods for polynomial systems, which admit an alternative system representation via a bound on unknown disturbances.
\end{remark}
\begin{remark}
\label{re:pseudoinverse}
The control matrix $H(x)$ in \eqref{eq:dd_H} depends on the pseudoinverse of the matrix of basis functions $\Xi(x)$. 
If $p<m$, $\Xi(x)^\dagger$ is a right inverse. In general, there may be infinite matrices $H(x)^\dagger$ such that the lower block row in \eqref{eq:G_definition} holds. If $\Xi(x)$ is square and invertible $\Xi(x)^\dagger = \Xi(x)^{-1}$. For general $p>m$, \eqref{eq:dd_H} is the least squares solution of the lower block row in \eqref{eq:G_definition}, and may hence not satisfy the condition exactly. Note, however, that $\Xi(x)$ is chosen by the designer. Hence, we can pick a suitable $\Xi(x)$ and $\Xi(x)^\dagger$ such that \eqref{eq:dd_H} is uniquely defined. The main constraint for the choice of dimension $p$ is the size of the library of basis functions $\mathcal{V}^g$ and hence the information we have about the function $g(x)$. 
In addition to the special case $\Xi(x) = I_m$ mentioned in Remark~\ref{re:system_representation}, $\Xi(x)$ can for example generally be chosen to be invertible if the functions in g(x) are known exactly up to a scaling factor.
\end{remark}

\section{Data-driven control via algebraic $P$ solutions} \label{sec:solution}
In this section, we combine the notion of algebraic $P$ solutions (\cite{Sassano2018}) with the data-driven system representation in Section~\ref{sec:data} to solve the problems defined in Section~\ref{sec:problem} for unknown input-affine systems. 

\subsection{Stabilisation} \label{subsec:stab}
Recall the stabilisation problem introduced in Section~\ref{subsec:probstab}. 
Finding a function $V: \mathbb{R}^n \times \mathbb{R}^{\nu} \rightarrow \mathbb{R}_{\geq 0}$ which satisfies the PDE \eqref{eq:stabilityPDE} is generally challenging. Inspired by \cite{Sassano2018}, consider the Lyapunov function candidate
\begin{equation}
    V(x, \xi) = \frac{1}{2} x^\top P(\xi) x + \frac{1}{2} (x-\xi)^\top R (x-\xi),
    \label{eq:Lyap}
\end{equation}
for $\xi \in \mathbb{R}^n$, where $R = R^\top \succ 0$ is a known constant matrix, which acts as a tuning parameter, and $P: \mathbb{R}^n \rightarrow \mathbb{R}^n\times\mathbb{R}^n$ is a matrix-valued mapping (to be designed), such that $P(\xi)=P(\xi)^\top \succ0$, for all $\xi$ in a neighbourhood containing the origin. Then, the first condition in \eqref{eq:lyap_las} holds and  
\begin{align}
    V_x &= P(\xi) x + R(x -\xi),\\
    V_{\xi} &= \frac{1}{2} \mathbf{J}_{x^\top P(\xi)}(\xi) x - R(x -\xi).
    \label{eq:partial_derivatives}
\end{align}
\begin{lemma}
    \label{le:stab_static_feedback}
    Let Assumptions~\ref{as:unknown_but_well_behaved}, \ref{as:basis functions} and \ref{as:data} hold and let $Y(x) \in \mathbb{R}^{T \times n}$, $\tilde P(x) =   \tilde P(x)^\top \succ 0 \in \mathbb{R}^{n \times n}$ be a solution of the feasibility problem
    \begin{align}
        X_+ Y(x) + Y(x)^\top X_+^\top &\prec 0, \label{eq:las_dd_inequality}\\
        Z(x) \tilde P(x) = \mathcal{Z} Y(x).
        \label{eq:las_dd_equality}
    \end{align}
    Then, for  sufficiently large $\kappa$,  the controller \eqref{eq:dynamic-state-feedback} with
    \begin{subequations}
    \label{eq:lad_dd_static_u}
    \begin{align}
        \tilde h(x,\xi) &= H(x)x = \Xi(x)^\dagger \mathcal{U}Y(x)P(x)x, \label{eq:dd_static_h}\\
        \alpha(x,\xi) &= - \kappa V_{\xi}, 
    \end{align}
    \end{subequations} 
    where $P(x) = \tilde P(x)^{-1}$, is such that the zero equilibrium of \eqref{eq:augmented_sys_dynamic_cl} is LAS.
\end{lemma}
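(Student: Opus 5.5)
Set $Y(x) = \tilde G(x)\tilde P(x)$, i.e.\ $\tilde G(x) = Y(x)P(x)$ with $P(x)=\tilde P(x)^{-1}$. Then \eqref{eq:las_dd_equality} gives $Z(x) = \mathcal{Z}\tilde G(x)$, which is the condition in Lemma~\ref{le:dd_sys_rep}. We use the lower block row of \eqref{eq:G_definition} to define $H(x)$ as in \eqref{eq:dd_static_h}; under the standing choice of $\Xi(x)$ in Remark~\ref{re:pseudoinverse}, this makes $\Xi(x)H(x) = \mathcal{U}\tilde G(x)$. Lemma~\ref{le:dd_sys_rep} then yields
\[
F(x)+g(x)H(x) = X_+ Y(x) P(x) =: A_{cl}(x).
\]
Inequality \eqref{eq:las_dd_inequality} reads $A_{cl}\tilde P + \tilde P A_{cl}^\top \prec 0$. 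Pre- and post-multiplying by $P(x)$ gives
\[
P(x)A_{cl}(x) + A_{cl}(x)^\top P(x) \prec 0,
\]
a pointwise Lyapunov inequality for the closed-loop matrix evaluated at $x$. Here $P$ is evaluated at $x$, whereas $V$ in \eqref{eq:Lyap} uses $P(\xi)$. The dynamic extension is meant to make $\xi$ track $x$, which is the source of the mismatch handled below.

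Next compute $\dot V$ along \eqref{eq:augmented_sys_dynamic_cl} using \eqref{eq:partial_derivatives}:
\[
\dot V = V_x^\top A_{cl}(x)x - \kappa \|V_\xi\|^2 .
\]
Expand $V_x = P(x)x + (P(\xi)-P(x))x + R(x-\xi)$, so that
\[
\dot V = \tfrac12 x^\top\bigl(P(x)A_{cl}+A_{cl}^\top P(x)\bigr)x + x^\top(P(\xi)-P(x))A_{cl}x + (x-\xi)^\top R A_{cl} x - \kappa\|V_\xi\|^2 .
\]
By continuity and compactness of a small neighbourhood, the first term is bounded by $-\lambda\|x\|^2$ with $\lambda>0$. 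Smoothness of $P$ gives $\|P(\xi)-P(x)\|\le L\|x-\xi\|$, so the second and third terms are bounded by $c\|x\|\,\|x-\xi\|$. We also have $V_\xi = -R(x-\xi) + \tfrac12 \mathbf{J}_{x^\top P(\xi)}(\xi)x$, and the Jacobian term is $O(\|x\|^2)$ locally, hence higher order. Therefore $\|V_\xi\|^2 \ge \tfrac12 \underline{r}^2\|x-\xi\|^2 - O(\|x\|^4)$, where $\underline r$ is the smallest eigenvalue of $R$.

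Collecting terms gives the quadratic-form bound
\[
\dot V \le -\begin{bmatrix}\|x\| & \|x-\xi\|\end{bmatrix}
\begin{bmatrix} \lambda - \epsilon & -c/2 \\ -c/2 & \kappa \underline r^2/2 \end{bmatrix}
\begin{bmatrix}\|x\| \\ \|x-\xi\|\end{bmatrix},
\]
where $\epsilon$ absorbs the higher-order terms on a sufficiently small neighbourhood $\mathcal{N}$. This matrix is positive definite once $\kappa > c^2/(2(\lambda-\epsilon)\underline r^2)$. Then $\dot V<0$ on $\mathcal{N}\setminus\{0\}$, because $\|x\|=\|x-\xi\|=0$ forces $(x,\xi)=0$. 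Since $V$ is positive definite near the origin, \eqref{eq:lyap_las} holds and LAS follows.

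The main obstacle is this last step: getting uniform constants $\lambda, c$ and controlling the Jacobian term of $V_\xi$ so that it is genuinely higher order. The Jacobian term is quadratic in $x$ only after multiplication by $x$, so I expect to bound $\|\mathbf{J}_{x^\top P(\xi)}(\xi)x\|\le k\|x\|^2$ locally. The first thing I would check is that its cross terms with $R(x-\xi)$ in $\|V_\xi\|^2$ can be absorbed by Young's inequality. A secondary point to verify is the exactness of \eqref{eq:dd_H}, which requires $\Xi\Xi^\dagger\mathcal{U}\tilde G = \mathcal{U}\tilde G$; I assume this holds by the design choice of $\Xi(x)$ discussed in Remark~\ref{re:pseudoinverse}.
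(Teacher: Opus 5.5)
Your argument is correct, and its first half is the paper's own reduction. The change of variable $Y=\tilde G\tilde P$, Lemma~\ref{le:dd_sys_rep} and the congruence with $P(x)$ turn \eqref{eq:las_dd_inequality}, \eqref{eq:las_dd_equality} into the pointwise inequality \eqref{eq:algPsolution_stab_matrix_eq}. The paper also relies on the identity $\Xi\Xi^\dagger\mathcal{U}\tilde G=\mathcal{U}\tilde G$ that you flag; it does so implicitly (``by the choice of $u$''), with the justification left to Remark~\ref{re:pseudoinverse}. The identity holds, e.g., whenever $\Xi(x)$ has full row rank.

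The difference is in the second half. The paper does not derive \eqref{eq:lyap_las} from \eqref{eq:algPsolution_stab_matrix_eq}; it imports this from \cite[Proposition 2.3]{Sassano2018}, which rests on the Finsler/augmented-Hessian type result of \cite{Anstreicher2000}. You replace the citation with a self-contained $2\times 2$ quadratic-form bound in $(\|x\|,\|x-\xi\|)$, treating the Jacobian part of $V_\xi$ as an $O(\|x\|^2)$ perturbation via $\|a+b\|^2\ge\tfrac12\|a\|^2-\|b\|^2$. That step checks out and gives an explicit $\kappa^\star$. The citation is shorter and keeps the proof inside the algebraic-$P$ framework that is reused in Theorem~\ref{th:optimal_control}.

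One detail you should make explicit. After Young's inequality the remainder is $\kappa k^2\|x\|^4$, not $k^2\|x\|^4$, so the constants must be chosen in this order:
\begin{itemize}
\item fix $\lambda, c, k$ on a compact neighbourhood;
\item take $\epsilon=\lambda/2$ and $\kappa^\star=c^2/(\lambda\underline r^2)$;
\item only then, for each $\kappa>\kappa^\star$, restrict to $\|x\|^2\le\lambda/(2\kappa k^2)$.
\end{itemize}
This is enough for LAS as stated, but your neighbourhood shrinks as $\kappa$ grows. If you want one neighbourhood valid for all $\kappa>\kappa^\star$, as the paper's wording and the set $\mathcal{B}$ in Theorem~\ref{th:optimal_control} suggest, measure the mismatch by $e=V_\xi$ instead of $x-\xi$. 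Since $R(\xi-x)=e-\tfrac12\mathbf{J}_{x^\top P(\xi)}(\xi)x$, the same expansion gives $\dot V\le-\lambda\|x\|^2+c\|x\|\|e\|+k\|x\|^3-\kappa\|e\|^2$. The higher-order term there no longer carries $\kappa$.
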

\begin{proof}
    Since $\tilde P(\cdot)\succ 0$, the function $V(x,\xi)$ in \eqref{eq:Lyap}, with $P(\cdot)$ as given in the statement, is positive definite by construction. Then, the result in  \cite[Proposition 2.3]{Sassano2018} entails (via the main result in \cite{Anstreicher2000}) that there exists $\kappa^\star>0$ such that \eqref{eq:lyap_las} holds in a neighbourhood $\mathcal{N}$ containing the origin of \eqref{eq:augmented_sys_dynamic_cl} with $\tilde h = H(x)x$ and $\alpha$ as in \eqref{eq:lad_dd_static_u} for any $\kappa > \kappa^\star$, provided that the matrix-valued mapping $P(x)$ satisfies 
    \begin{equation}
        P(x)\left(F(x) + g(x) H(x) \right) + \left(F(x) + g(x) H(x) \right)^\top P(x) \prec 0.
        \label{eq:algPsolution_stab_matrix_eq}
    \end{equation}
    Via a congruence transformation with $P(x)^{-1}$, using
    Lemma~\ref{le:dd_sys_rep} and introducing the change of variable $Y(x) = \tilde G(x) \tilde P(x)$, \eqref{eq:algPsolution_stab_matrix_eq} gives \eqref{eq:las_dd_inequality}. Note that \eqref{eq:dd_sysdyn_cl} relies on condition \eqref{eq:G_definition}, which holds true if \eqref{eq:las_dd_equality} is satisfied and by the choice of $u$ as in \eqref{eq:lad_dd_static_u}.
\end{proof}
Lemma~\ref{le:stab_static_feedback} utilises the analysis result in \cite[Proposition 2.3]{Sassano2018} to design stabilising \emph{static} state-feedback controllers directly using data.
Next we show how the dynamic extension \eqref{eq:dynamic-state-feedback-xi} can be used more ``actively'' to design \emph{dynamic} 
feedback controllers from data.\\

\begin{lemma}
    \label{le:stab_dynamic_feedback}
    Let Assumptions~\ref{as:unknown_but_well_behaved}, \ref{as:basis functions} and \ref{as:data} hold and let $Y(x) \in \mathbb{R}^{T \times n}$, $\tilde P(x) =   \tilde P(x)^\top \succ 0 \in \mathbb{R}^{n \times n}$ be a solution of the feasibility problem \eqref{eq:las_dd_inequality}, \eqref{eq:las_dd_equality}. Then, for sufficiently large $\kappa$, the controller  
    \eqref{eq:dynamic-state-feedback} with
     \begin{subequations}
    \label{eq:lad_dd_dynamic_u}
    \begin{align}
        \tilde h(x,\xi) &= \Xi(x)^\dagger \mathcal{U}Y(x) \left(P(\xi)x + R(x-\xi)\right), \label{eq:dd_dynamic_h}\\
        \alpha(x,\xi) &= - \kappa V_{\xi},  \label{eq:dd_dynamic_xi}
    \end{align}
    \end{subequations}
    where $P(x) = \tilde P(x)^{-1}$, is such that the zero equilibrium of \eqref{eq:augmented_sys_dynamic_cl} is LAS. 
\end{lemma}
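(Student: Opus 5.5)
The plan is to mirror the proof of Lemma~\ref{le:stab_static_feedback}, this time using the dynamic controller in place of the static one. First, $V$ in \eqref{eq:Lyap} is positive definite by construction, since $\tilde P(\cdot)\succ 0$ gives $P(\xi)\succ 0$ near the origin and $R\succ 0$. Next, by Lemma~\ref{le:dd_sys_rep}, the feasibility problem \eqref{eq:las_dd_inequality}, \eqref{eq:las_dd_equality} with $Y(x)=\tilde G(x)\tilde P(x)$ yields, via a congruence with $P(x)$, the algebraic condition \eqref{eq:algPsolution_stab_matrix_eq}. This holds with the data-driven gain $H(x)=\Xi(x)^\dagger\mathcal{U}Y(x)P(x)$, and the closed-loop matrix $F(x)+g(x)H(x)=X_+Y(x)P(x)$. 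In other words, $P(x)$ is an algebraic $P$ solution for the closed loop, exactly as before.

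The key observation is that the dynamic law \eqref{eq:dd_dynamic_h} can be written as
\[
\tilde h(x,\xi)=H(x)P(x)^{-1}V_x=\Xi(x)^\dagger\mathcal{U}Y(x)V_x .
\]
Thus it equals the static gain $H(x)x$ plus a correction $H(x)\tilde P(x)\bigl[(P(\xi)-P(x))x+R(x-\xi)\bigr]$. I would then compute
\[
\dot V=V_x^\top\bigl(F(x)x+g(x)\tilde h\bigr)-\kappa\|V_\xi\|^2 .
\]
Substituting gives
\[
V_x^\top\bigl(F(x)+g(x)H(x)\bigr)x+V_x^\top g(x)H(x)\tilde P(x)\bigl[(P(\xi)-P(x))x+R(x-\xi)\bigr]-\kappa\|V_\xi\|^2 .
\]
Writing $V_x=P(x)x+\bigl[(P(\xi)-P(x))x+R(x-\xi)\bigr]$, the leading term $x^\top P(x)(F+gH)x$ is negative definite by \eqref{eq:algPsolution_stab_matrix_eq}. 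All remaining terms are quadratic forms in $(x,\,x-\xi)$ that vanish when $x=\xi$, up to higher-order terms. This holds because $P(\xi)-P(x)=O(|x-\xi|)$ by smoothness of $P$, which follows from smoothness of $\tilde P$.

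The final step is to show negativity of the full quadratic form in $(x,\,e)$, with $e=x-\xi$, for large $\kappa$. This is the main obstacle. As in \cite[Proposition 2.3]{Sassano2018}, the term $-\kappa\|V_\xi\|^2$ must dominate the indefinite cross terms, while $V_\xi=\tfrac12\mathbf{J}_{x^\top P(\xi)}(\xi)x-Re$. Near the origin, $V_\xi=-Re+O(|x|^2)$, so $-\kappa\|V_\xi\|^2\le -\kappa\lambda_{\min}(R)^2|e|^2+\text{h.o.t.}$ Collecting terms, $\dot V\le -\epsilon|x|^2-\kappa c|e|^2+c_1|x||e|+c_2|e|^2+\text{h.o.t.}$ I would then invoke the same argument used in \cite{Sassano2018} (via \cite{Anstreicher2000}): a quadratic form that is negative definite on the subspace $\{e=0\}$ becomes negative definite once a sufficiently large multiple of a form positive definite in $e$ is subtracted. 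This gives $\kappa^\star$ such that $\dot V<0$ in a neighbourhood $\mathcal N$ of the origin for all $\kappa>\kappa^\star$, which establishes LAS of \eqref{eq:augmented_sys_dynamic_cl}. Care is needed to verify that the higher-order terms are dominated uniformly in a small enough neighbourhood, which is the usual local argument.
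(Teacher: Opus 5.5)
Your proposal is correct and follows essentially the same route as the paper: you rewrite $\tilde h$ as the static term $H(x)x$ plus a correction linear in $x-\xi$ (the paper's $\Phi(x,\xi)$ decomposition), use the algebraic $P$ inequality \eqref{eq:algPsolution_stab_matrix_eq} obtained via Lemma~\ref{le:dd_sys_rep} for the $x$-block, and let $-\kappa\|V_\xi\|^2$ dominate the remaining terms via the argument of \cite{Sassano2018} and \cite{Anstreicher2000}. You spell out the quadratic-form bookkeeping that the paper leaves implicit; note only that the higher-order terms coming from $\kappa\|V_\xi\|^2$ scale with $\kappa$, so $\mathcal N$ should be chosen after fixing $\kappa>\kappa^\star$, which is all the lemma requires.
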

\begin{proof}
    Let $\Phi: \mathbb{R}^n \times \mathbb{R}^n \rightarrow \mathbb{R}^{n \times n}$ be a continuous mapping such that $P(\xi)x = P(x) x -\Phi(x,\xi)(x-\xi)$. Then \eqref{eq:dd_dynamic_h} can equivalently be written as 
    \begin{equation}
        \tilde h (x,\xi) = \Xi(x)^\dagger \mathcal{U}Y(x) \left(P(x) x + (R-\Phi(x,\xi))(x-\xi) \right).
        \label{eq:dd_dynamic_h_seperated}
    \end{equation}
    Note that the first term in \eqref{eq:dd_dynamic_h_seperated} is equal to $H(x)x$ in \eqref{eq:dd_static_h}. 
    Using this reformulation, recalling Lemma~\ref{le:dd_sys_rep} and following analogous arguments to \cite[Proposition 2.3 and Theorem 3.1]{Sassano2018} and 
    Lemma~\ref{le:stab_static_feedback}, it can be shown that if \eqref{eq:lad_dd_dynamic_u} is such that $P(x)$, $Y(x)$ satisfy \eqref{eq:las_dd_inequality}, \eqref{eq:las_dd_equality}
    there exists a $\kappa^\star>0$ such that \eqref{eq:lyap_las} holds in a neighbourhood $\mathcal{N}$ containing the origin of \eqref{eq:augmented_sys_dynamic_cl} for any $\kappa > \kappa^\star$. 
\end{proof}

\begin{remark}
    \label{re:why_is_this_interesting}
    Direct data-driven stabilisation of nonlinear systems has been intensively studied in recent years, see \cite{DePersis2023review} for an overview. Similar conditions to Lemma~\ref{le:stab_static_feedback} have for example been presented in \cite{Guo2020} using sum of squares arguments. The two alternative results presented in this section illustrate the approach which will allow us to design approximately optimal controllers directly using data in Section~\ref{subsec:optimal}. However, the data-driven dynamic state-feedback design in Lemma~\ref{le:stab_dynamic_feedback} is interesting in its own right, as it introduces the tunable parameters $\kappa$, $R$ and $\xi_0$, which can be designed to shape the behaviour of the resulting closed-loop system and the region of attraction of the zero equilibrium.
\end{remark}

\subsection{Approximate optimal control} \label{subsec:optimal}
Recall the approximate optimal control problem introduced in Section~\ref{subsec:approximate_optimal}.
Solving \eqref{eq:HJB} is generally challenging. Inspired by \cite{Sassano2018} and using Lemma~\ref{le:dd_sys_rep} we provide a solution via purely data-dependent algebraic conditions for the unknown system \eqref{eq:sysdyn}.
\begin{theorem}
    \label{th:optimal_control}
    Let Assumptions~\ref{as:cost}, \ref{as:unknown_but_well_behaved}, \ref{as:basis functions} and \ref{as:data} hold and let $Y(x) \in \mathbb{R}^{T \times n}$, $\tilde P(x) =   \tilde P(x)^\top \succ 0 \in \mathbb{R}^{n \times n}$ be a solution of the feasibility problem
    \begin{align}
    \label{eq:oc_dd_inequality}
        \begin{bmatrix}
            X_+ Y(x) + Y(x)^\top X_+^\top & (\Xi(x)^\dagger \mathcal{U}Y(x))^\top  & \tilde P(x) C(x)^\top \\
            \Xi(x)^\dagger \mathcal{U}Y(x) & -I_m & 0 \\
            C(x)\tilde P(x) & 0 & -I_\mu
        \end{bmatrix} \prec 0,
    \end{align}
    and \eqref{eq:las_dd_equality}. Then, there exists a constant $\kappa^\star$ and an open set $\mathcal{B}$, such that for all  $\kappa > \kappa^\star$ and all $(x, \xi) \in \mathcal{B}$, \eqref{eq:HJB} holds with
    $\tilde h(x,\xi)$ and $\alpha(x,\xi)$ as defined in \eqref{eq:lad_dd_dynamic_u}, where $P(x) = \tilde P(x)^{-1}$, and with the function $V$ given by \eqref{eq:Lyap}, for some 
    $\Upsilon(x,\xi) > 0$. 
    Hence, the the controller given by 
    \eqref{eq:dynamic-state-feedback}, \eqref{eq:lad_dd_dynamic_u} solves the 
    auxiliary, local optimal control problem defined in Section~\ref{subsec:approximate_optimal}.
\end{theorem}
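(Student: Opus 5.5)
The plan is to reduce the data-dependent LMI \eqref{eq:oc_dd_inequality} to the model-based algebraic $P$ solution condition of \cite{Sassano2018}. Then \cite[Theorem 3.1]{Sassano2018} (or its dynamic-extension counterpart) gives the existence of $\kappa^\star$, $\mathcal{B}$ and $\Upsilon \geq 0$ (indeed $>0$ away from the origin) such that \eqref{eq:HJB} holds with $V$ as in \eqref{eq:Lyap}.

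\textbf{Step 1: from \eqref{eq:oc_dd_inequality} to a Riccati-type inequality.} Apply a Schur complement with respect to the $-I_m$ and $-I_\mu$ blocks. This shows that \eqref{eq:oc_dd_inequality} is equivalent to
\[
X_+Y(x)+Y(x)^\top X_+^\top + (\Xi(x)^\dagger\mathcal{U}Y(x))^\top \Xi(x)^\dagger\mathcal{U}Y(x) + \tilde P(x) C(x)^\top C(x)\tilde P(x) \prec 0 .
\]
As in Lemma~\ref{le:stab_static_feedback}, constraint \eqref{eq:las_dd_equality} together with the choice $Y(x)=\tilde G(x)\tilde P(x)$ makes \eqref{eq:G_definition} hold with $H(x)=\Xi(x)^\dagger\mathcal{U}Y(x)P(x)$. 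Lemma~\ref{le:dd_sys_rep} then gives $X_+Y(x)=(F(x)+g(x)H(x))\tilde P(x)$. The inequality can be pre- and post-multiplied by $P(x)=\tilde P(x)^{-1}$ (a congruence). This yields
\[
P(x)A_{cl}(x)+A_{cl}(x)^\top P(x) + H(x)^\top H(x) + C(x)^\top C(x) \prec 0, \qquad A_{cl}(x)=F(x)+g(x)H(x).
\]

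\textbf{Step 2: identify $H$ with the optimal gain.} The feedback in \eqref{eq:dd_dynamic_h} is meant to be $-g(x)^\top V_x$, since $V_x=P(\xi)x+R(x-\xi)$. I will therefore check that $\Xi(x)^\dagger\mathcal{U}Y(x)$ plays the role of $-g(x)^\top$, i.e. $H(x)=-g(x)^\top P(x)$. This is the main obstacle. The LMI only provides \emph{some} gain $H$, so a direct identification may fail. I would try first to avoid needing it.

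For any $u$, completing the square gives $\tfrac12 u^\top u + V_x^\top g u \geq -\tfrac12 |g^\top V_x|^2$. The strict inequality of Step 1 with a general $H$ then implies, via $H^\top H + PA_{cl} + A_{cl}^\top P \succeq PF + F^\top P - Pgg^\top P$, the algebraic $P$ inequality of \cite{Sassano2018}:
\[
P(x)F(x)+F(x)^\top P(x) - P(x)g(x)g(x)^\top P(x) + C(x)^\top C(x) \prec 0 .
\]
With this, $\min_{\tilde h}\Psi$ is attained at $\tilde h=-g^\top V_x$, and the argument of \cite[Theorem 3.1]{Sassano2018} applies verbatim. If the paper insists on the controller \eqref{eq:dd_dynamic_h} literally, I would instead define $\Upsilon$ to absorb the mismatch $\tfrac12|\tilde h + g^\top V_x|^2$. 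Step 1 makes this term nonnegative and quadratic, so the HJB equality \eqref{eq:HJB} holds by construction of $\Upsilon$.

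\textbf{Step 3: construct $\Upsilon$ and $\mathcal{B}$.} Write $P(\xi)x=P(x)x-\Phi(x,\xi)(x-\xi)$ as in Lemma~\ref{le:stab_dynamic_feedback}, and substitute $V_x,V_\xi$ from \eqref{eq:partial_derivatives} and $\alpha=-\kappa V_\xi$ into \eqref{eq:feedback_Hamiltonian}. Then define $\Upsilon(x,\xi)$ as the value making $\Psi=0$, namely $\Upsilon = -q - \tilde h^\top\tilde h - 2V_x^\top(f+g\tilde h) + 2\kappa|V_\xi|^2$. Using the strict inequality from Step 1 and a quadratic bound in $(x,x-\xi)$ near the origin, the result of \cite{Anstreicher2000} (as invoked in \cite[Proposition 2.3]{Sassano2018}) shows that for $\kappa>\kappa^\star$ this expression is positive definite on some neighbourhood $\mathcal{B}$. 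Here $\kappa|V_\xi|^2$ dominates the cross terms in $x-\xi$.

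\textbf{Step 4: conclude.} $\Psi\equiv 0$ on $\mathcal{B}$ gives $\dot V = -\tfrac12(q+\Upsilon+\tilde h^\top\tilde h)<0$, hence LAS. Shrinking $\mathcal{B}$ to a sublevel set of $V$ ensures it lies in the region of attraction. Standard verification arguments then yield \eqref{eq:optimality_condition}.
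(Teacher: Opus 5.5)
Your Steps 1, 3 and 4 reproduce the paper's proof. You apply a Schur complement and a congruence with $P(x)$ to obtain \eqref{eq:oc_matrix_ineq}, define $\Upsilon$ exactly as in \eqref{eq:Uppsilon} so that $\Psi$ vanishes at the data-driven feedback, get positivity for large $\kappa$ via \cite{Anstreicher2000}, and conclude $\dot V\le-\tfrac12\Upsilon$.

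Where you go beyond the paper is Step 2. There you correctly identify the real issue: nothing in \eqref{eq:oc_dd_inequality}, \eqref{eq:las_dd_equality} forces $\Xi(x)^\dagger\mathcal{U}Y(x)=-g(x)^\top$, i.e.\ $H(x)=-g(x)^\top P(x)$. Neither of your fixes closes it.

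\textbf{Option (a)} proves the statement for the feedback $-g(x)^\top V_x$. That is not \eqref{eq:dd_dynamic_h}, and it cannot be implemented without knowing $g$.

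\textbf{Option (b)} fails structurally. $\Upsilon$ does not depend on $u$, so it cannot move the minimiser of $\Psi$ over $\tilde h$; that minimiser is $-g(x)^\top V_x$ whatever $\Upsilon$ is. Indeed $\Psi(\tilde h)-\min_{\tilde h}\Psi=\tfrac12|\tilde h+g(x)^\top V_x|^2$. Your Step 3 choice of $\Upsilon$ therefore gives $\min_{\tilde h}\Psi=-\tfrac12|\tilde h_{dd}+g^\top V_x|^2<0$ wherever the mismatch is nonzero. Choosing $\Upsilon$ to make $\min_{\tilde h}\Psi=0$ instead makes $-g^\top V_x$, not $\tilde h_{dd}$, the optimal feedback.

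The mismatch genuinely occurs. In the scalar case $F=0$, $g=C=1$, $P=2$, the gain $H=-1$ satisfies $C^2+H^2+2P(F+gH)=-2<0$, while $-g^\top P=-2\neq H$.

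Consequently the ``standard verification argument'' in your Step 4 yields only local asymptotic stability and the guaranteed-cost identity $\tilde J(x_0,\xi_0,\tilde h_{dd})=V(x_0,\xi_0)$ on an invariant sublevel set. It does not yield \eqref{eq:optimality_condition}.

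The paper's proof stops at exactly the same place. It reads \eqref{eq:HJB} as ``$\Psi=0$ at the given $\tilde h$'' and never addresses the identification, so your argument establishes precisely what the paper's does, no more. To obtain genuine optimality you have two options:
\begin{itemize}
\item Add the constraint $\Xi(x)^\dagger\mathcal{U}Y(x)=-g(x)^\top$ to the feasibility problem. By your completion-of-squares computation this loses nothing at the level of admissible $P$. Expressing it in data, however, requires recovering $G$, e.g.\ under \eqref{eq:rank_condition}.
\item Weaken the conclusion to a guaranteed-cost statement.
\end{itemize}
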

\begin{proof}
    By Lemma~\ref{le:dd_sys_rep}, the change of variable $Y(x) = \tilde G(x) \tilde P(x)$, and a congruence transformation with $P(x)^{-1}$, \eqref{eq:oc_dd_inequality}, \eqref{eq:las_dd_equality} are such that
    \begin{multline}
        C(x)^\top C(x) + H(x)^\top H(x) + P(x) \left( F(x) + g(x) H(x)\right) \\ + \left( F(x) + g(x) H(x)\right)^\top P(x) \prec 0,
        \label{eq:oc_matrix_ineq}
    \end{multline}
    for $H(x)$ as defined in \eqref{eq:dd_static_h}. Using \eqref{eq:dd_dynamic_h_seperated} and arguments analogous to \cite[Theorem 3.1]{Sassano2018}, there exists $\kappa^\star$ such that 
    \begin{multline}
        - \frac{1}{2}\Upsilon(x,\xi) := \frac{1}{2}q(x)  + \frac{1}{2} \tilde h(x,\xi)^\top \tilde h(x,\xi) \\ + V_x^\top (f(x) + g(x) \tilde h(x, \xi)) - \kappa V_{\xi}^\top V_{\xi} < 0,
        \label{eq:Uppsilon}
    \end{multline}
    for all $\kappa > \kappa^\star$ and $(x,\xi) \in \mathcal{B}$, via the main result in \cite{Anstreicher2000}. 
    Finally, consider $V$ as in \eqref{eq:Lyap} as a candidate Lyapunov function. Then, 
    $$ \dot V = V_x^\top (f(x) + g(x) \tilde h(x, \xi)) - \kappa V_{\xi}^\top V_{\xi} \leq -\frac{1}{2}\Upsilon(x,\xi) < 0,$$
    by construction. Hence, the zero equilibrium of system \eqref{eq:augmented_sys_dynamic_cl} with $\tilde h$, $\alpha$ as in \eqref{eq:lad_dd_dynamic_u}, is LAS. 
\end{proof}

\begin{remark}
    \label{re:nonstrict_inequalities}
    The strict inequalities \eqref{eq:las_dd_inequality} and \eqref{eq:oc_dd_inequality} could be replaced with nonstrict inequalities subject to the additional constraint that the matrices on the left hand side of \eqref{eq:las_dd_inequality} and \eqref{eq:oc_dd_inequality} evaluated at $x=0$ are positive definite, which is in line with the definition of algebraic $P$ solutions in \cite{Sassano2018}, and may lead to potentially less conservative solutions with larger region of attraction of the zero equilibrium of the closed-loop system. Proving local asymptotic stability then involves LaSalle's invariance
    principle and zero-state detectability, see \cite{Sassano2021} and references therein for more details. For ease of exposition, we instead consider strict inequalities.
\end{remark}

\begin{remark}
    \label{re:minimise_degree_of_approximation}
    The result of Theorem~\ref{th:optimal_control} provides a method to design data-driven dynamic state-feedback controllers which are exact solutions of the auxiliary, local problem defined in Section \ref{subsec:approximate_optimal} and hence approximate solutions of the ``classical'' optimal control problem defined by \eqref{eq:sysdyn}, \eqref{eq:cost},
    with guaranteed stability and performance (locally). The additional cost arising from the presence of the dynamic extension \eqref{eq:dd_dynamic_xi} can be minimised via a suitable initialisation, \emph{i.e.} choosing $\xi_0 \in \text{\emph{arg}}\min_\xi V(x_0,\xi)$, for a given initial condition $x_0$ of system \eqref{eq:sysdyn}. Additionally, by allowing the gain $\kappa$ to vary, 
    \emph{i.e.} allowing the dynamic extension 
    to evolve according to dynamics of the form $\dot \xi = - \kappa(x, \xi)V_{\xi}$, 
    the additional running cost term $\Upsilon(x,\xi)$ can be minimised. More precisely, if $\kappa(x,\xi)$ is chosen as in \cite[Theorem 3.2]{Sassano2018}, $\Upsilon(x,\xi) \rightarrow 0$ apart from on a certain sub-manifold which can be rendered arbitrarily small and in which $\Upsilon(x,\xi)$ is bounded. Note that the model-based conditions in \cite[Theorem 3.2]{Sassano2018} can be parametrised using data using Lemma~\ref{le:dd_sys_rep}.
\end{remark}

\begin{remark}
    \label{re:noise}
    While we consider the noise-free case, \emph{i.e.} system \eqref{eq:sysdyn} and data \eqref{eq:data_matrices} are not affected by noise, the underlying approach extends to other control problems involving the solution of PDEs, including
    $\mathcal{L}_2$-disturbance attenuation (\cite{Sassano2012}). 
    An important question in data-driven control is how to represent unknown systems based on noisy data. Considering process noise, several system representation  approaches have been proposed, see e.g. \cite{DePersis2023review}. We expect that these can be applied to derive alternative data-driven algebraic conditions for the problems considered herein.
    Considering measurement noise, if the type of system dynamics is known, the approach to treat measurement noise similarly to process noise introduced in \cite{DePersis2020} extends without difficulties to nonlinear systems (\cite{DePersis2023review}). 
    Like many related results (\emph{e.g.} \cite{Guo2022,Madeira2024}), 
    our results depend on measurements of 
    $\dot x$, which are often unavailable in practice and might have to be approximated via filters. Hence, the samples $\dot x_d$ are most likely to be affected by measurement noise. While a system representation as in \cite{Guo2022} can be used instead, note that the presented methods have an inherent degree of robustness as demonstrated in the example in Section~\ref{sec:example}. This is due to the construction based on the Lyapunov function \eqref{eq:Lyap}.
\end{remark}

\section{Example} \label{sec:example}
Consider the controlled van der Pol oscillator, \emph{i.e.} \eqref{eq:sysdyn} with
\begin{align*}
    f(x) = \begin{bmatrix}
         x_2 \\
        -x_1 + (1-x_1^2)x_2
    \end{bmatrix}, \ \ \
    g(x) = \begin{bmatrix}
        0 \\
        1
    \end{bmatrix},
\end{align*}
and the cost \eqref{eq:cost} with $q(x) = 1.25x_1^2 + 2x_2^2+6x_1^2x_2^2$.
Note that $f$, $g$, and $q$ satisfy Assumptions~\ref{as:properties_dynamics} and \ref{as:cost} and that the equilibrium at the origin of the open-loop system $\dot x = f(x)$ is unstable. 
For this simple example, we can determine a closed-form solution to the ``classical'' optimal control problem \eqref{eq:sysdyn}, \eqref{eq:cost}, (using model knowledge) by solving the corresponding HJB PDE (see \emph{e.g.} \cite{liberzon2011}). More precisely, the static feedback law $u^* =-0.5x_1 - 3x_2$ is the optimal control input with value function $V(x) = 2x_1^2 + 0.5x_1x_2 + 1.5x_2^2+0.125x_1^4$.
Let $f$ and $g$ be unknown and let Assumption~\ref{as:data} hold. Consider the libraries of basis functions $\mathcal{V}^f = \mathcal{V}^g = \mathcal{V}$, with $\mathcal{V}$ containing monomials up to order two. The system can be written in the form \eqref{eq:sys_basis2}. We choose
\begin{align*}
    Z(x) = \begin{bmatrix}
        x_1x_2  & 0 \\
    1   & -1 \\
    0 & 1 \\
    \end{bmatrix}, \ \
    \Xi(x) = \begin{bmatrix}
        1 \\
    \end{bmatrix}.
\end{align*}
We excite the system by choosing $u_d = 10\cos(20t)$
starting from $x_0=\begin{bmatrix}
    0.1 & 0.5
\end{bmatrix}^\top$ and collect $T = \ell + p = 4$ data samples $x_d$ and $\dot x_d$ with a sampling time of $\Delta = 0.01$s to form the matrices \eqref{eq:data_matrices}, \eqref{eq:data_func_matrices}. A solution to the data-driven feasibility problem \eqref{eq:oc_dd_inequality}, \eqref{eq:las_dd_equality} is computed using SOSTOOLS (\cite{sostools}). The tunable parameters are chosen as $R = I_n$, $\kappa = 2$ and $\xi_0 = \begin{bmatrix}
    0.1 & 0.1
\end{bmatrix}^\top$. 
Figure~\ref{fig:xandu} shows the time histories of the control input, the dynamic extension and the state of the resulting closed-loop system starting from $x_0 = \begin{bmatrix}
    0.1 & 0.1
\end{bmatrix}^\top$ for three different controllers. That is, $u_{dd}(x,\xi)$ obtained using the result of Theorem~\ref{th:optimal_control}, $u_{dd}^{\text{noisy}}(x,\xi)$ obtained using the result of Theorem~\ref{th:optimal_control} with the entries of $X_+$ computed via a finite-difference approximation, i.e. $\dot x_d(t_k) = (x_d(t_{k+1})-x_d(t_k))/\Delta$ for $k = 0, \ldots, T-1$, and $u^*(x)$ defined above. The corresponding costs (rounded to two decimal places) are $J(x_0,u^*) = 0.04 $, $J(x_0,\xi_0,u_{dd}) = 0.06$, and $J(x_0,\xi_0,u_{dd}^{\text{noisy}}) = 0.27$. 
Despite not explicitly accounting for noisy derivative measurements, the controller $u_{dd}^{\text{noisy}}(x,\xi)$ designed using Theorem~\ref{th:optimal_control} with noise-corrupted $X_+$ stabilises the zero equilibrium of the resulting closed-loop system. However, it results in a higher cost. Using noise-free measurements, the controller $u_{dd}(x,\xi)$ designed using Theorem~\ref{th:optimal_control} stabilises the zero equilibrium of the resulting closed-loop system and results in an only slightly higher cost than the known solution to the ``classical problem'' $u^\star(x)$. 
\begin{figure}[t!]
    \centering
    \vspace{-0.75cm}
    \includegraphics[width=0.99\linewidth]{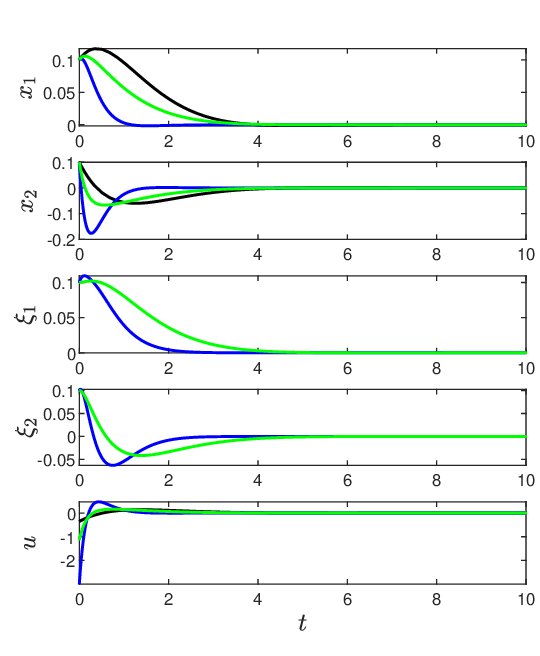}
     \vspace{-1cm}
    \caption{Time-histories of the state, dynamic extension and input for the Van der Pol example with $u = u_{dd}(x,\xi)$ (green), $u = u_{dd}^{\text{noisy}}(x,\xi)$ (blue), 
    $u = u^*(x)$ (black).}
    \vspace{0.4cm}
    \label{fig:xandu}
\end{figure}

\section{Conclusion} \label{sec:conclusion}
We consider the problem of designing approximately optimal controllers for optimal control problems involving nonlinear, input-affine systems with unknown dynamics directly using data. First, we introduce a direct data-driven system representation that is amenable to our problem formulation. This is then utilised in the context of a framework to systematically construct locally stabilising controllers and approximate solutions to optimal control problems via system immersion and algebraic conditions. 
The presented methods do not require the explicit solution of any nonlinear PDE, but solutions are instead characterised via purely data-dependent algebraic conditions. 
We comment on the sources of approximation and the effect of noisy measurements on the solution. The efficacy of the results is illustrated via a numerical example.


\balance
\bibliography{refs}                                                   
\end{document}